\newtheorem{prop}{Proposition}[section]
\newtheorem{lema}{Lemma}[section]
\newtheorem{teo}{Theorem}[section]
\newtheorem{coro}{Corollary}[section]
\theoremstyle{definition}
\newtheorem{defi}{Definition}[section]
\newtheorem{rem}{Remark}[section]
\def\R{{\mathbb R}}
\def\N{{\mathbb N}}
\def\M{{\mathcal M}}
\def\var{{\text{var}}}
\title[Weak Gibbs and asymptotically additive sequences]{Weak Gibbs measures as Gibbs measures for asymptotically additive sequences}
\date{\today}
\subjclass[2000]{37D35, 37D25}
\keywords{Thermodynamic formalism, weak Gibbs measures, asymptotically additive sequences}
\author{Godofredo Iommi} \address{Facultad de Matem\'aticas,
Pontificia Universidad Cat\'olica de Chile (PUC), Avenida Vicu\~na Mackenna 4860, Santiago, Chile}
\email{giommi@mat.puc.cl}
\urladdr{http://www.mat.puc.cl/\textasciitilde giommi/}
\author{Yuki Yayama}
\address{Departamento de Ciencias B\'{a}sicas, Universidad del B\'{i}o-B\'{i}o, Avenida Andr\'{e}s Bello, s/n
Casilla 447, Chill\'{a}n, Chile}
\email{yyayama@ubiobio.cl}
\begin{document}

\begin{abstract}
In this note we prove that every weak Gibbs measure for an asymptotically additive sequences is a Gibbs measure for
another  asymptotically additive sequence. In particular, a weak Gibbs measure for a continuous potential is
a Gibbs measure for an asymptotically additive sequence. This allows, for example, to apply recent results on dimension
theory of asymptotically additive sequences to  study multifractal analysis for weak Gibbs measure for continuous potentials.
\end{abstract}

\maketitle

\section{Introduction and Preliminaries}
Gibbs measures have played a prominent role in ergodic theory since the definition was brought from statistical
mechanics into dynamical systems (see for example \cite{bow, sin}).  Existence of Gibbs measures usually requires strong forms of hyperbolicity on the system and of regularity on the potential. In her study of equilibrium measures for intermittent interval maps, Yuri \cite{y1,y2,y3} introduced the notion of weak Gibbs measure.  It turns out that these measures, which generalize the classical notion of Gibbs measures, exist under meagre regularity assumptions on the potential and for a wider class of dynamical systems. Technically, the main difference between the two notions is that for Gibbs measures we have a uniform control on the measure of dynamical balls while for weak Gibbs measures this control is not uniform (see Section \ref{s.wg} for precise statements).  This circle of ideas has been generalized to settings in which instead of considering a single potential we consider a sequence of potentials. This theory, usually called non-additive thermodynamic formalism, was introduced by Falconer \cite{f} with the purpose of studying dimension theory of non-conformal systems. We stress that it is also a well suited theory to study products of matrices. The purpose of the present note is to show that a weak Gibbs measure for a potential is actually a Gibbs measure for a sequence of potentials.  That point of view allows for the use of  machinery developed to study Gibbs measures for sequences of potentials in the study of weak Gibbs measures for a single potential. As an example of the possibilities that are opened with this viewpoint we prove a variational principle for higher order multifractal analysis of weak Gibbs measures.

\subsection{Weak Gibbs measures} \label{s.wg}
This section is devoted to define the notion of weak Gibbs measure for compact symbolic spaces and to briefly discuss conditions that ensure both, their existence and that they do not have atoms.

Let $(\Sigma, \sigma)$ be a one-sided Markov shift defined over a finite alphabet $\mathcal{A}$. This means that there exists a matrix $T=(t_{ij})_{\mathcal{A} \times \mathcal{A} }$ of zeros and ones such that
\begin{equation*}
\Sigma=\left\{ \omega \in \mathcal{A} ^{\N} : t_{\omega_{i} \omega_{i+1}}=1 \ \text{for every $i \in \N$}\right\}.
\end{equation*}
The \emph{shift map} $\sigma:\Sigma \to \Sigma$ is defined by $\sigma(\omega_1 \omega_2 \dots)=(\omega_2 \omega_3 \dots)$.  We assume that  $(\Sigma, \sigma)$ is topologically mixing, which in this setting means that there exists a positive integer $l \in \N$ such that
all the entries of the matrix $T^l$ are strictly positive. The set
\begin{equation*}
 C_{i_1 \cdots i_{n}}:= \left\{\omega \in \Sigma : \omega_j=i_j \text{ for } 1 \le j \le n \right\}
 \end{equation*}
is called \emph{cylinder} of length $n$. The space $\Sigma$ endowed with the topology generated by cylinder sets is a compact space. Morevoer,  the metric $d:\Sigma \times \Sigma \to \R $ defined by $d(\omega, \kappa):=\sum_{i=1}^{\infty} \frac{|\omega_i - \kappa_i|}{2^i}$ induces the same topology as the one generated by the cylinder sets.

 In order to understand the geometric or dynamical  properties of a probability measure $\mu$ on $\Sigma$  it is of great importance to have good estimates on the measure of the cylinder sets. This simple remark is captured in the definition of Gibbs measure (see \cite[Chapter 1]{bow}) which was later generalized by  Yuri  \cite{y1,y2,y3} in the following sense.

\begin{defi} \label{d.wg}
A probability measure $\mu$ is called \emph{weak Gibbs} measure for the potential $\phi: \Sigma \to \R$ if there exists $P\in \R$ and  a
 sequence of positive real numbers $K(n)$ satisfying
\[\lim_{n \to \infty} \frac{ \log K(n)}{n}=0,\]
such that, for every $n \in \N$, every cylinder $C_{i_1 \dots i_n}$ and every  $\omega \in
C_{i_1 \dots i_n}$ we have
\begin{equation*}
\frac{1}{K(n)} \leq \frac{\mu(C_{i_1 \dots i_{n}})}{\exp(S_n \phi(\omega) -n P)}\leq K(n),
\end{equation*}
where $S_n\phi(\omega):= \sum_{j=0}^{n-1} \phi(\sigma^j \omega)$.
\end{defi}
If for every $n \in \N$ we have that $K(n)=K$ we recover the classical notion of Gibbs measure (see \cite[Chapter 1]{bow}).
It turns out that  weak Gibbs measures exists under very mild assumptions on the potential and that the number $P$ can be characterized in thermodynamic terms.  Let $\phi:\Sigma \to \R$, we define its \emph{n-th variation} by
\[ \var_n(\phi) := \sup_{C \in Z_n} \sup_{\omega,\kappa \in C} \{|\phi(\omega) -\phi(\kappa)|  \}, \]
where $Z_n$ denotes the set of all cylinders of length $n$. Denote by $\eta_{\phi}(n) := \var_n (S_n \phi)$. The class of \emph{medium varying functions} is that of bounded, measurable, real-valued functions $\phi:\Sigma \to \R$ such that
\[ \lim_{n \to \infty} \frac{\eta_{\phi}(n)}{n} =0.\]
Note that every continuous function belongs to this class.  Let $\phi:\Sigma \to \R$ be a medium varying function,
the \emph{topological pressure} of $\phi$ is defined by
$$P(\phi):= \lim_{n\rightarrow\infty} \frac{1}{n} \log \sum_{C \in Z_n} \exp \left(\sup_{\omega\in C}S_n\phi(\omega)) \right).$$
Note that the above limit exists and is finite (see \cite[Lemma 1]{k}). The following result due to Kesseb\"ohmer \cite[Section 2]{k} establishes the existence of weak Gibbs measure for medium varying functions.

\begin{prop}[Kesseb\"ohmer]\label{bound}
For every medium varying function $\phi:\Sigma \to \R$ there exists a weak Gibbs measure $\mu$. Moreover, in definition \ref{d.wg}, the constants can be chosen so that
\[K(n)= \exp(\eta_{\phi}(n)),\]
and the value of $P$ is equal to $P(\phi)$.
\end{prop}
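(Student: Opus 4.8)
The plan is to construct the measure $\mu$ as a weak-$*$ accumulation point of measures supported on long cylinders, with masses governed by the partition sums appearing in the definition of $P(\phi)$, and then to read off the two weak Gibbs bounds from the almost-multiplicativity of those sums. Throughout I write $\Lambda_n := \sum_{C \in Z_n}\exp\bigl(\sup_{\omega\in C}S_n\phi(\omega)\bigr)$, so that $P(\phi)=\lim_n \tfrac1n\log\Lambda_n$. The first step is two elementary estimates on $\Lambda_n$ that drive everything and that, crucially, involve only suprema and hence never see the variation $\eta_\phi$. The decomposition $S_{n+m}\phi = S_n\phi + S_m\phi\circ\sigma^m$ together with the injection of each cylinder of $Z_{n+m}$ into a pair (a cylinder of $Z_n$, a cylinder of $Z_m$) gives submultiplicativity $\Lambda_{n+m}\le\Lambda_n\Lambda_m$; thus $\log\Lambda_n$ is subadditive, the pressure limit exists (as already recorded in the excerpt), and $\Lambda_n\ge e^{nP(\phi)}$. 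For the reverse comparison I would use topological mixing: any cylinder of $Z_n$ can be joined to any cylinder of $Z_m$ by an admissible word of the fixed length $l$, which (since $\phi$ is bounded) costs a factor at least $e^{-l\|\phi\|_\infty}$, yielding $\Lambda_{n+l+m}\ge e^{-l\|\phi\|_\infty}\Lambda_n\Lambda_m$. A short argument absorbing the gap $l$ then produces a constant $c\ge 1$ with $e^{nP(\phi)}\le\Lambda_n\le c\,e^{nP(\phi)}$ for all $n$.

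Next I would construct the measure. For each $N$ define a probability measure $\mu_N$ by assigning to every $C\in Z_N$ the mass $\mu_N(C)=\Lambda_N^{-1}\exp\bigl(\sup_{\omega\in C}S_N\phi(\omega)\bigr)$ and distributing this mass inside $C$ arbitrarily, say as a point mass; since $\sum_{C\in Z_N}\mu_N(C)=1$ this is a genuine probability measure. By compactness of $\Sigma$ the sequence $(\mu_N)$ has a weak-$*$ accumulation point $\mu$, again a probability measure, along some subsequence $N_k\to\infty$. The key simplification is that cylinders are clopen in $\Sigma$, so their boundary is empty and $\mu(C)=\lim_k\mu_{N_k}(C)$ for every cylinder $C$; it therefore suffices to bound $\mu_N(C)$ by the desired expression with constants independent of $N$, and then pass to the limit.

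Now fix $C=C_{i_1\cdots i_m}$, a point $\omega\in C$, and take $N>m$. Writing each $D\in Z_N$ with $D\subseteq C$ as $D=C\cap\sigma^{-m}(E)$ with $E=\sigma^m(D)\in Z_{N-m}$, the identity $S_N\phi=S_m\phi+S_{N-m}\phi\circ\sigma^m$, surjectivity of $\sigma^m\colon D\to E$, and the oscillation bound $\sup_C S_m\phi-\inf_C S_m\phi\le\eta_\phi(m)$ give
\[
S_m\phi(\omega)-\eta_\phi(m)+\sup_{E}S_{N-m}\phi \;\le\; \sup_{D}S_N\phi \;\le\; S_m\phi(\omega)+\eta_\phi(m)+\sup_{E}S_{N-m}\phi .
\]
Summing $\exp(\sup_D S_N\phi)$ over $D\subseteq C$ and dividing by $\Lambda_N$, the inner sum $\sum_E\exp(\sup_E S_{N-m}\phi)$ runs over those $E\in Z_{N-m}$ admissible after $C$; it is trivially $\le\Lambda_{N-m}$, and by topological mixing it is $\ge e^{-l\|\phi\|_\infty}\Lambda_{N-m-l}$. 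Feeding in the two-sided comparison $e^{nP(\phi)}\le\Lambda_n\le c\,e^{nP(\phi)}$, every power of $e^{NP(\phi)}$ cancels and one obtains constants $c_1,c_2>0$, independent of $N$ and $m$, with
\[
c_1\,e^{-\eta_\phi(m)}\exp\bigl(S_m\phi(\omega)-mP(\phi)\bigr) \;\le\; \mu_N(C) \;\le\; c_2\,e^{\eta_\phi(m)}\exp\bigl(S_m\phi(\omega)-mP(\phi)\bigr).
\]
Letting $N=N_k\to\infty$ transfers these bounds to $\mu(C)$, which is precisely the weak Gibbs property with $P=P(\phi)$.

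Finally I would set $K(m)=\max\{c_1^{-1},c_2\}\,e^{\eta_\phi(m)}$; since $c_1,c_2$ are independent of $m$ and $\eta_\phi(m)/m\to0$ by the medium varying hypothesis, this $K$ satisfies $\tfrac1m\log K(m)\to0$, giving exactly the asserted form of Definition \ref{d.wg} (the bare choice $K(m)=e^{\eta_\phi(m)}$ then follows by a sharper accounting of the absolute constants, which is inessential for the applications). I expect the main obstacle to be the \emph{lower} estimates on the partition sums: establishing the superadditivity-with-gap $\Lambda_{n+l+m}\ge e^{-l\|\phi\|_\infty}\Lambda_n\Lambda_m$ and, equivalently, showing that the cylinders \emph{admissible after $C$} still carry a full exponential share of the mass are exactly the points where topological mixing and the careful handling of the connecting word of length $l$ are indispensable; the remaining work is routine bookkeeping with the decomposition of Birkhoff sums and the variation $\eta_\phi$.
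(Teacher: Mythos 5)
Your construction has a genuine gap, and it sits exactly where you predicted the main obstacle would be: the superadditivity estimate. You claim $\Lambda_{n+l+m}\ge e^{-l\|\phi\|_\infty}\Lambda_n\Lambda_m$ and assert that the partition-sum estimates ``involve only suprema and hence never see the variation $\eta_\phi$''; this is false. The point is that $S_n\phi$ is \emph{not} a function of the first $n$ coordinates: the supremum $\sup_C S_n\phi$ is approached at points of $C$ whose coordinates beyond position $n$ are in general incompatible with the prescribed continuation $uE$. Once you restrict to the concatenated cylinder $D=CuE$, the best available bound on the prefix sum is $S_n\phi \ge \sup_C S_n\phi - \eta_\phi(n)$, so the correct statement is $\Lambda_{n+l+m}\ge e^{-l\|\phi\|_\infty-\eta_\phi(n)}\Lambda_n\Lambda_m$. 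Consequently the derived two-sided comparison $e^{nP(\phi)}\le \Lambda_n\le c\,e^{nP(\phi)}$ is unjustified and in general false: the Fekete-type argument only yields $\Lambda_n\le c\,e^{nP(\phi)+\eta_\phi(n)}$, and medium variation merely gives $\eta_\phi(n)=o(n)$, with $\eta_\phi(n)\to\infty$ in precisely the non-Bowen cases this proposition is designed for. Two sanity checks: subadditivity alone permits $\log\Lambda_n=nP+\sqrt{n}$, so no soft argument can bound $\log\Lambda_n-nP$; and a Gibbs property with uniform constants forces Bowen's condition $\sup_n\eta_\phi(n)<\infty$, so any argument that produced uniform constants for all medium varying $\phi$ would prove too much. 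Tellingly, you \emph{do} insert the $\eta_\phi(m)$ correction in the entirely analogous step inside your cylinder decomposition (the displayed inequality for $\sup_D S_N\phi$); the superadditivity step needs it for the same reason.

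The good news is that the construction is repairable, at the cost of the sharp constant. Avoid the false uniform bound and work with ratios: from $\Lambda_N\ge e^{-l\|\phi\|_\infty-\eta_\phi(m)}\Lambda_m\Lambda_{N-m-l}$ together with $\Lambda_{N-m}\le \Lambda_l\Lambda_{N-m-l}$ and $\Lambda_m\ge e^{mP(\phi)}$ one gets $\Lambda_{N-m}/\Lambda_N\le C e^{\eta_\phi(m)}e^{-mP(\phi)}$, and symmetrically $\Lambda_{N-m-l}/\Lambda_N\ge C'e^{-\eta_\phi(m)}e^{-mP(\phi)}$ using $\Lambda_m\le c\,e^{mP(\phi)+\eta_\phi(m)}$; note these penalties are always $\eta_\phi(m)$, never $\eta_\phi(N)$, which is what saves the passage $N\to\infty$. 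This yields the weak Gibbs property with $K(m)=C\,e^{2\eta_\phi(m)}$, which still satisfies $\frac{1}{m}\log K(m)\to 0$ and $P=P(\phi)$, so the qualitative statement survives. But your parenthetical that the bare choice $K(m)=e^{\eta_\phi(m)}$ ``follows by a sharper accounting of the absolute constants'' is not right: the doubled $\eta_\phi(m)$ is intrinsic to the sup-weighted counting construction, entering once through the oscillation inside $C$ and a second time through the $\Lambda$-ratio. For comparison, the paper itself does not prove this proposition: it cites Kesseb\"ohmer, who obtains $\mu$ as a conformal measure (a fixed point of the dual of the transfer operator) and reads the bounds, with the stated constants $K(n)=\exp(\eta_\phi(n))$, directly off the iterated Jacobian identity on cylinders. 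So your Misiurewicz-style route is genuinely different and, once repaired as above, proves existence of a weak Gibbs measure with $P=P(\phi)$; the sharp form of $K(n)$ asserted in the proposition really does come from the conformal-measure construction.
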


\begin{rem} \label{non-ato}
Kesseb\"ohmer also showed that a weak Gibbs measure $\mu$ for the potential $\phi$ is positive on cylinders and if in addition we have that $\sup \phi < P(\phi)$ then the measure $\mu$ is atom free. Inoquio-Renteria and Rivera-Letelier in \cite[Proposition 3.1]{ij} showed that if $\phi:\Sigma \to \R$ is a continuous potential
satisfying the following condition: there exists an integer $n \geq 1$ such that
\begin{equation} \label{juan}
\sup_{\omega \in \Sigma} \frac{1}{n} S_n(\phi(\omega)) < P(\phi),
\end{equation}
then $\phi$ is cohomologous to a continuous potential $\overline{\phi}$ satisfying
\[ \sup_{\omega \in \Sigma} \overline{\phi}(\omega) < P(\overline{\phi}).\]
In particular, if a continuous potential $\phi$ satisfies equation \eqref{juan} then the weak Gibbs measure for $\phi$ is non-atomic. Let us stress that there exists continuous potentials satisfying equation \eqref{juan} for some $n >1$ and not for $n=1$ (see \cite[Remark 3.3]{ij}).
 \end{rem}

Kesseb\"ohmer \cite{k} in fact proved that conformal measures corresponding to medium varying function are indeed weak Gibbs measures.
In general these are not invariant measures.  However, if the potential $\phi$ is a  g-function (see \cite{ke} for a precise
definition) then the conformal measure is invariant. Also, if we assume more regularity on the potential $\phi$,   for example potentials belonging to the Bowen class (see \cite[p.329]{w2} for a precise definition)
 then the potential  is cohomologous to a g-function (see  \cite[Proof of Theorem 4.5]{w2}) and thus  has an invariant weak Gibbs measure.

\subsection{Non-additive thermodynamic formalism} \label{s.na} Motivated from the study of dimension theory of non-conformal dynamical systems, Falconer \cite{f} introduced a non-additive generalization of thermodynamic formalism. This theory has been considerably developed over the last years (see \cite{b5} for a general account on the theory). In this non-additive setting, the topological pressure of a continuous function is replaced by the topological pressure of a sequence of continuous functions. Different types of additivity assumptions are usually made on the sequence. For example, we say that a sequence $\Phi:=(\phi_n)_n$ of continuous    functions $\phi_n :\Sigma \to\R$ is \emph{almost additive} if there exists a constant $C >0$ such that for every $n \in \N$ and every $\omega \in \Sigma$ we have
\begin{equation*}
 -C+ \phi_n(\omega) + \phi_{m}(\sigma^n \omega) \leq   \phi_{n+m}(\omega) \leq C+ \phi_n(\omega) + \phi_{m}(\sigma^n \omega).
\end{equation*}
Under this assumption thermodynamic formalism has been thoroughly studied (see \cite{b3,iy,m}). Another type of additivity assumption was introduced by Feng and Huang \cite{fh},
a sequence of continuous functions $\Phi:=(\phi_n)_n$ is
\emph{asymptotically additive} on $\Sigma$ if for every $\epsilon >0$ there exists a continuous function $\rho_{\epsilon}$ such that
\begin{equation}\label{aymad}
\limsup_{n \to \infty} \frac{1}{n} \| \phi_n -S_n \rho_{\epsilon}  \|  < \epsilon,
\end{equation}
where $\| \cdot \|$ is the supremum norm. Every almost additive sequence is asymptotically additive (see,
for example, \cite[Proposition 2.1]{zzc}) but the converse is not true.

Building upon the work of Feng and Huang  \cite{fh}, the topological pressure in this setting was defined by Cheng, Zhao and  Cao \cite{czc} (see also \cite{cfh, zzc} for related results). Let $\Phi=(\phi_n)_{n}$ be an asymptotically additive sequence on a sub-shift $\Sigma$.  Let $n \in \N$ and $\epsilon>0$,
a subset $E \subset \Sigma$ is an {\em $(n, \epsilon)$-separated subset} of
$\Sigma$ if $\max_{0\leq i\leq n-1}d(\sigma^{i}\omega, \sigma^{i}\kappa)>\epsilon$ for all $\omega, \kappa \in E, \omega\neq \kappa$.
Let
\begin{equation*}
P(\Phi, n,\epsilon):=\sup\left\{\sum_{\omega\in E}\exp(\phi_n(\omega)): E \text { is an }
(n, \epsilon)\text{-separated subset of } \Sigma \right\}.
\end{equation*}
The \emph{topological pressure} for
an asymptotically additive sequence $\Phi$ on $\Sigma$ is defined by
\begin{equation}\label{deftop}
P(\Phi):=\lim_{\epsilon\rightarrow  0}
\limsup_{n\rightarrow \infty} \frac{1}{n} \log P(\Phi, n, \epsilon).
\end{equation}
This notion of pressure satisfies the corresponding variational principle (see \cite{fh}),
\[P(\Phi)= \sup \left\{ h(\mu) + \lim_{n \to \infty} \frac{1}{n} \int \phi_n d \mu: \mu \in \M\right\},\]
where  $\M$ denotes the set of $\sigma-$invariant probability measures
and $h(\mu)$  the entropy of the measure $\mu$ (see \cite[Chapter 4]{w}).

In what follows we will give another characterization of the pressure using periodic points instead of  $(n, \epsilon)$-separated sets. This characterization is well known for the pressure of a single function defined on a topologically mixing  sub-shift of finite type. Let $\text{Fix}(\sigma):=\{ \omega\in \Sigma: \sigma \omega=\omega \}$. If $\phi:\Sigma \to \R$ is a continuous function and $(\Sigma, \sigma)$ is topologically mixing, it was shown by Ruelle (see \cite[Theorem 7.20]{ru}) that
\begin{equation} \label{ruelle}
P(\phi) =\lim_{n\rightarrow\infty} \frac{1}{n}\log \sum_{\omega \in \text{Fix}(\sigma^n)} \exp(S_n(\phi_n(\omega)).
\end{equation}
We further generalize this result to setting of asymptotically additive sequences.

\begin{prop}\label{defpressure}
Let $\Phi=(\phi_n)_{n}$ be an asymptotically additive sequence on a topologically mixing shift of finite type
 $(\Sigma, \sigma)$. Then
\begin{equation}\label{imp1}
P(\Phi) =\lim_{n\rightarrow\infty} \frac{1}{n}\log \sum_{\omega \in \text{Fix}(\sigma^n)} \exp(\phi_n(\omega)).
\end{equation}
\end{prop}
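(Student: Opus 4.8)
The plan is to use the asymptotic additivity of $\Phi$ to reduce the statement to Ruelle's formula \eqref{ruelle} for a single continuous potential, which is already available on a topologically mixing sub-shift of finite type. Fix $\epsilon>0$ and let $\rho_{\epsilon}$ be the continuous function provided by \eqref{aymad}, so that $\limsup_{n\to\infty}\frac{1}{n}\|\phi_n-S_n\rho_{\epsilon}\|<\epsilon$. The guiding idea is that, up to an error which is subexponential in $n$, each $\phi_n$ agrees with the Birkhoff sum $S_n\rho_{\epsilon}$, and the latter is governed by the classical theory.

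First I would compare the two sums over periodic points. Since $|\phi_n(\omega)-S_n\rho_{\epsilon}(\omega)|\le\|\phi_n-S_n\rho_{\epsilon}\|$ for every $\omega\in\Sigma$, summing over $\text{Fix}(\sigma^n)$ gives
\begin{equation*}
e^{-\|\phi_n-S_n\rho_{\epsilon}\|}\sum_{\omega\in\text{Fix}(\sigma^n)}e^{S_n\rho_{\epsilon}(\omega)}\le\sum_{\omega\in\text{Fix}(\sigma^n)}e^{\phi_n(\omega)}\le e^{\|\phi_n-S_n\rho_{\epsilon}\|}\sum_{\omega\in\text{Fix}(\sigma^n)}e^{S_n\rho_{\epsilon}(\omega)}.
\end{equation*}
Taking $\frac{1}{n}\log$, applying \eqref{ruelle} to the continuous potential $\rho_{\epsilon}$, and using $\limsup_n\frac{1}{n}\|\phi_n-S_n\rho_{\epsilon}\|<\epsilon$ together with the elementary rules for $\limsup$ and $\liminf$ of a sum (one summand convergent), I obtain
\begin{equation*}
P(\rho_{\epsilon})-\epsilon\le\liminf_{n\to\infty}\frac{1}{n}\log\sum_{\omega\in\text{Fix}(\sigma^n)}e^{\phi_n(\omega)}\le\limsup_{n\to\infty}\frac{1}{n}\log\sum_{\omega\in\text{Fix}(\sigma^n)}e^{\phi_n(\omega)}\le P(\rho_{\epsilon})+\epsilon.
\end{equation*}

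Next I would relate $P(\rho_{\epsilon})$ to $P(\Phi)$. Running the identical comparison with $(n,\delta)$-separated sets in place of periodic points, for every such set $E$ one has $\sum_{\omega\in E}e^{\phi_n(\omega)}\le e^{\|\phi_n-S_n\rho_{\epsilon}\|}\sum_{\omega\in E}e^{S_n\rho_{\epsilon}(\omega)}$, and the reverse inequality with the opposite sign in the exponent. Passing to the supremum over $E$, then to $\frac{1}{n}\log$, $\limsup_n$ and $\delta\to 0$ in the definition \eqref{deftop}, and recalling that for the single potential $\rho_{\epsilon}$ the pressure defined by separated sets coincides with the quantity $P(\rho_{\epsilon})$ appearing in \eqref{ruelle}, yields $|P(\Phi)-P(\rho_{\epsilon})|\le\epsilon$. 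Inserting this into the previous display gives
\begin{equation*}
P(\Phi)-2\epsilon\le\liminf_{n\to\infty}\frac{1}{n}\log\sum_{\omega\in\text{Fix}(\sigma^n)}e^{\phi_n(\omega)}\le\limsup_{n\to\infty}\frac{1}{n}\log\sum_{\omega\in\text{Fix}(\sigma^n)}e^{\phi_n(\omega)}\le P(\Phi)+2\epsilon,
\end{equation*}
and since $\epsilon>0$ was arbitrary the limit exists and equals $P(\Phi)$, which is \eqref{imp1}.

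The main obstacle is the comparison $|P(\Phi)-P(\rho_{\epsilon})|\le\epsilon$: the function $\rho_{\epsilon}$ from \eqref{aymad} controls $\phi_n$ only asymptotically and only up to an $o(n)$ error, so one must take the limits in the correct order ($n\to\infty$ before $\epsilon\to 0$) and keep the $\limsup$/$\liminf$ bookkeeping consistent throughout. Once this is handled, everything reduces to Ruelle's theorem \eqref{ruelle} for the single continuous potential $\rho_{\epsilon}$ and to routine exponential estimates.
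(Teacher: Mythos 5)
Your proposal is correct and follows essentially the same route as the paper: sandwiching the periodic-point sums between $e^{\pm\|\phi_n-S_n\rho_{\epsilon}\|}$ times the corresponding sums for $S_n\rho_{\epsilon}$, invoking Ruelle's formula \eqref{ruelle} for the single potential $\rho_{\epsilon}$, and then letting $\epsilon\to 0$. Your second step, establishing $|P(\Phi)-P(\rho_{\epsilon})|\le\epsilon$ via the identical comparison on $(n,\delta)$-separated sets, is exactly the paper's Lemma \ref{approx} (proved there the same way), which you have simply reproved inline -- and your explicit $\liminf$/$\limsup$ bookkeeping is, if anything, slightly more careful than the paper's displayed limits.
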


\begin{rem} \label{reg}
Let us note that  Proposition \ref{defpressure} has been proved under stronger additivity and regularity  assumptions.  Indeed,  let $\Phi=(\phi_n)_n$ be a sequence of continuous functions on $\Sigma$ and
\begin{equation}
\gamma_{n}(\Phi) := \var_n \phi_n= \sup\{\vert \phi_n(\omega)-\phi_n(\kappa)\vert: \omega, \kappa \in C_{i_1\dots i_n}\}.
\end{equation}
We say that $\Phi$ has \emph{tempered variation} if $\lim_{n\rightarrow \infty}\gamma_{n}(\Phi)/n=0$.
Barreira \cite{b2} showed  that if  $\Phi=(\phi_n)_n$ is an   almost additive sequences with tempered variation the equation (\ref{imp1}) holds. It should be noted, however, that  a simple modification of the argument in \cite[Lemma 2.1]{zzc} implies that  if $\Phi=(\phi_n)_{n}$ is an asymptotically additive sequence on a topologically mixing sub-shift  $(\Sigma, \sigma)$ then $\Phi$ has tempered variation.
\end{rem}
%
%
%

In order to prove proposition \ref{defpressure} we require some auxiliary results. A version of the following Lemma was proved in \cite[Lemma 2.3]{zzc}.

 \begin{lema}\label{approx}
Let $\Phi=(\phi_n)_{n}$ be an asymptotically additive sequence on a topologically mixing sub-shift $(\Sigma, \sigma)$
satisfying equation (\ref{aymad}). Then
$P(\Phi)=\lim_{\epsilon\rightarrow 0}P(\rho_{\epsilon})$.
\end{lema}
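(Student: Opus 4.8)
The plan is to exploit the fact that, for a fixed $\epsilon>0$, the term $\phi_n$ of the sequence $\Phi$ and the Birkhoff sum $S_n\rho_\epsilon$ are uniformly close up to an error of order $n\epsilon$, and to transfer this closeness directly into the separated-set definition \eqref{deftop} of the pressure. Throughout I reserve the letter $\delta$ for the separation scale, so as not to confuse it with the approximation parameter $\epsilon$ coming from \eqref{aymad}.

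First I would fix $\epsilon>0$ and write $\rho:=\rho_\epsilon$. By \eqref{aymad} there exists $N\in\N$ such that $\|\phi_n - S_n\rho\|< n\epsilon$ for all $n\ge N$. Consequently, for every $\omega\in\Sigma$ and every $n\ge N$,
\[
e^{-n\epsilon}\exp\big(S_n\rho(\omega)\big)\le \exp\big(\phi_n(\omega)\big)\le e^{n\epsilon}\exp\big(S_n\rho(\omega)\big).
\]
Summing this chain of inequalities over the points of an arbitrary $(n,\delta)$-separated set $E$ and taking the supremum over all such $E$ yields
\[
e^{-n\epsilon}\,Q(\rho,n,\delta)\le P(\Phi,n,\delta)\le e^{n\epsilon}\,Q(\rho,n,\delta),
\]
where $Q(\rho,n,\delta):=\sup\{\sum_{\omega\in E}\exp(S_n\rho(\omega))\}$ is the analogous quantity built from the Birkhoff sums of the single function $\rho$.

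Next I would apply $\tfrac1n\log(\cdot)$ throughout, then $\limsup_{n\to\infty}$, and finally $\lim_{\delta\to0}$. Since the $\limsup$ and the limit in $\delta$ leave the additive constants $\pm\epsilon$ untouched, the definition \eqref{deftop} of $P(\Phi)$ together with the standard separated-set characterization of the pressure of the single continuous function $\rho$ (so that $\lim_{\delta\to0}\limsup_n \tfrac1n\log Q(\rho,n,\delta)=P(\rho)$) gives
\[
P(\rho_\epsilon)-\epsilon\le P(\Phi)\le P(\rho_\epsilon)+\epsilon,
\]
that is $|P(\Phi)-P(\rho_\epsilon)|\le\epsilon$. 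Letting $\epsilon\to0$ shows that the limit $\lim_{\epsilon\to0}P(\rho_\epsilon)$ exists and equals $P(\Phi)$, which is the assertion.

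I would expect no serious obstacle, since the argument is essentially a continuity estimate for the pressure under sub-linear perturbations. The only points requiring care are bookkeeping ones: keeping the separation scale $\delta$ distinct from the approximation scale $\epsilon$, and verifying that the strict inequality in \eqref{aymad} indeed produces a uniform bound $\|\phi_n-S_n\rho_\epsilon\|<n\epsilon$ for all sufficiently large $n$, which is all that the estimate above uses.
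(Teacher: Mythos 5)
Your proposal is correct and follows essentially the same argument as the paper's own proof: fix the approximation parameter, sandwich $\exp(\phi_n(\omega))$ between $e^{\pm n\epsilon}\exp\bigl(S_n\rho_\epsilon(\omega)\bigr)$ uniformly over $(n,\delta)$-separated sets, pass to $\tfrac{1}{n}\log$, $\limsup_{n\to\infty}$ and $\delta\to 0$ to get $|P(\Phi)-P(\rho_\epsilon)|\le\epsilon$, and let $\epsilon\to 0$. The only differences are cosmetic: the paper works with the bound $2\bar\epsilon$ rather than $\epsilon$ and reuses the letter $\epsilon$ for the separation scale, whereas your bookkeeping with $\delta$ is slightly cleaner.
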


\begin{proof}
Let $\bar\epsilon>0$.  Since $\Phi$ is an asymptotically additive sequence, there exist
$\rho_{\bar \epsilon}\in C(\Sigma)$ and $N_{\bar\epsilon}$ such that  for every $ \omega\in \Sigma$ and $n\geq N_{\bar\epsilon}$
\begin{equation*}
\frac{1}{n}(S_n\rho_{\bar \epsilon})(\omega)-2\bar \epsilon \leq \frac{1}{n}\phi_n (\omega)
\leq \frac{1}{n}(S_n\rho_{\bar \epsilon})(\omega)+2\bar \epsilon.
\end{equation*}
Thus, for $n\geq N_{\bar\epsilon}$ and $\epsilon>0$
\begin{align*}
&\exp({-2n}{\bar \epsilon})\sup\left\{\sum_{\omega\in E}
\exp((S_n\rho_{\bar \epsilon}) (\omega)): E \text{ is an } (n, \epsilon)
\text{ separated subset of } \Sigma \right\}\\
&\leq \sup\left\{\sum_{\omega\in E} \exp(\phi_n(\omega)): E \text{ is an }(n, \epsilon)
\text{ separated subset of }
\Sigma\right\}\\
&\leq
\exp(2{n}{\bar \epsilon})\sup\left\{\sum_{\omega \in E} \exp((S_n\rho_{\bar \epsilon}) (\omega)): E \text{ is an } (n, \epsilon)
\text{ separated subset of } \Sigma \right\}.
\end{align*}
Therefore,
\begin{align*}
&-2\bar \epsilon +\frac{1}{n}
\log \left(\sup\left\{\sum_{\omega\in E} \exp((S_n\rho_{\bar \epsilon})(\omega)): E \text{ is an }(n, \epsilon)
\text{ separated subset of } \Sigma\right\}\right)\\
&\leq \frac{1}{n}\log \left(\sup\left\{\sum_{\omega \in E}\exp(\phi_n(\omega)):E \text{ is an }(n, \epsilon)\text{
separated subset of } \Sigma\right\}\right)\\
&\leq 2\bar \epsilon +\frac{1}{n}
\log \left(\sup\left\{\sum_{\omega \in E} \exp((S_n\rho_{\bar \epsilon})(\omega)): E \text{ is an }(n, \epsilon)\text{
separated subset of } \Sigma\right\}\right).
\end{align*}
Hence
\begin{equation*}
-2\bar\epsilon +P(\rho_{\bar\epsilon})\leq P(\Phi)\leq 2\bar\epsilon +P(\rho_{\bar\epsilon}).
\end{equation*}
Since for every $\bar \epsilon >0$  we can obtain the above inequality, letting $\bar \epsilon\rightarrow 0$, we have that
$P(\Phi)=\lim_{\bar\epsilon \rightarrow 0}P(\rho_{\bar \epsilon})$.
\end{proof}

\begin{proof}[Proof of Proposition \ref{defpressure}]
Since $\Phi$ is asymptotically additive, for a fixed $\bar \epsilon>0$, there exist a continuous function $\rho_{\bar \epsilon}$ and   $N_{\bar \epsilon} \in \N$ such that for every $n>N_{\bar \epsilon}$ we have that
\begin{equation*}
(S_n\rho_{\bar\epsilon})(\omega)-2n\bar{\epsilon}\leq  \phi_n(\omega)  \leq (S_n\rho_{\bar\epsilon})(\omega)+2n\bar\epsilon.
\end{equation*}
Therefore, for every $n>N_{\bar \epsilon}$ we have that
\begin{eqnarray*}\label{key1}
\exp(-2n\bar\epsilon)  \sum_{\omega \in \text{Fix}(\sigma^n)}\exp((S_n\rho_{\bar\epsilon})(\omega))\leq &\\
 \sum_{\omega \in \text{Fix}(\sigma^n)}\exp(\phi_n(\omega))
\leq
\exp(2n\bar\epsilon)  \sum_{\omega \in \text{Fix}(\sigma^n)}\exp((S_n\rho_{\bar\epsilon})(\omega)).
\end{eqnarray*}
Thus,
\begin{eqnarray*}\label{key1}
-2\bar\epsilon + \frac{1}{n} \log \left(  \sum_{\omega \in \text{Fix}(\sigma^n)}\exp((S_n\rho_{\bar\epsilon})(\omega))  \right)\leq &\\
 \frac{1}{n} \log \left( \sum_{\omega \in \text{Fix}(\sigma^n)}\exp(\phi_n(\omega)) \right)
\leq
2\bar\epsilon +  \frac{1}{n} \log \left( \sum_{\omega \in \text{Fix}(\sigma^n)}\exp((S_n\rho_{\bar\epsilon})(\omega))\right).
\end{eqnarray*}
Letting $n \to \infty$ and making use of Ruelle's representation of the pressure of a continuous function by means of periodic points as in equation \eqref{ruelle}  we obtain
\begin{equation*}
-2\bar\epsilon + P(\rho_{\bar\epsilon}) \leq \lim_{n \to \infty}  \frac{1}{n} \log \left( \sum_{\omega \in \text{Fix}(\sigma^n)}\exp(\phi_n(\omega )) \right)
\leq 2\bar\epsilon + P(\rho_{\bar\epsilon}).
\end{equation*}
Since for every $\bar \epsilon >0$  we can obtain the above inequality, letting $\bar \epsilon\rightarrow 0$ and making use of Lemma \ref{approx} we have that
\begin{equation*}
P(\Phi)= \lim_{\bar\epsilon \to 0}  P(\rho_{\bar\epsilon}) \leq \lim_{n \to \infty}  \frac{1}{n} \log \left( \sum_{ \omega \in \text{Fix}(\sigma^n)}\exp(\phi_n(\omega)) \right)
\leq \lim_{\bar\epsilon \to 0}  P(\rho_{\bar\epsilon})= P(\Phi).
\end{equation*}
\end{proof}

\section{Weak Gibbs measure for asymptotically additive sequences} \label{wgaas}
The notion of weak Gibbs measure can be extended to the setting of asymptotically additive sequence.
\begin{defi}
A probability measure $\mu$ is called a \emph{weak Gibbs measure} for the  asymptotically additive sequence $\Phi=(\phi_n)_n$ on
$\Sigma$ if there exists a  sequence of positive real numbers  $K(n)$ satisfying
\[\lim_{n \to \infty} \frac{ \log K(n)}{n}=0,\]
such that, for every $n \in \N$, every cylinder $C_{i_1 \dots i_n}$ and every  $\omega \in
C_{i_1 \dots i_n}$ we have
\begin{equation*}
\frac{1}{K(n)} \leq \frac{\mu(C_{i_1 \dots i_{n}})}{\exp(\phi_n(\omega) -n P(\Phi))}\leq K(n),\end{equation*}
where $P(\Phi)$ is the topological pressure for $\Phi$.
\end{defi}
\begin{rem}
It was shown by Barreira \cite{b2}  that if $\Phi$ is an almost additive sequence of continuous functions with tempered variation,  then there exists an ergodic weak Gibbs measure, not necessary invariant though (note that in virtue of Remark
\ref{reg} the regularity assumption is not really needed). This result generalizes that of Kesseb\"ohmer. Indeed,  if $\phi$ is a
continuous potential on $\Sigma$ then  $\Phi= (S_n(\phi))_n$ is an almost additive sequence. Moreover,  $\Phi$ has tempered variation if and only if $\phi$ is a medium varying function.
\end{rem}

In this section we prove that every weak Gibbs measure for an asymptotically additive sequence is a Gibbs measure for, another,  asymptotically additive sequence. Simplifying, therefore, the study of such measure.

\begin{teo} \label{main-weak}
If $\mu$ is a weak Gibbs measure for  an asymptotically additive sequence $\Phi:=(\phi_n)_n$ on a topologically mixing Markov shift $(\Sigma, \sigma)$ then
there exists an asymptotically additive sequence $\Psi:=(\psi_n)_n$ on $\Sigma$ such that the measure $\mu$ is Gibbs with respect to $\Psi$.
\end{teo}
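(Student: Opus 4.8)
The plan is to take for $\Psi$ the sequence of locally constant functions prescribed by the measure itself. Concretely, for each $n \in \N$ define $\psi_n : \Sigma \to \R$ by setting $\psi_n(\omega) := \log \mu(C_{i_1 \dots i_n})$ whenever $\omega \in C_{i_1 \dots i_n}$. This is well defined and finite because the weak Gibbs inequality forces $\mu(C_{i_1\dots i_n}) \ge K(n)^{-1}\exp(\phi_n(\omega) - nP(\Phi)) > 0$ on every cylinder (recall each $\phi_n$ is continuous, hence bounded on the compact space $\Sigma$). Since there are only finitely many cylinders of length $n$ and each is clopen, $\psi_n$ is locally constant, and therefore continuous. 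With this choice we will have $\exp(\psi_n(\omega)) = \mu(C_{i_1\dots i_n})$ \emph{exactly}; thus, once we check that $\Psi := (\psi_n)_n$ is asymptotically additive with $P(\Psi) = 0$, the Gibbs ratio $\mu(C_{i_1\dots i_n}) / \exp(\psi_n(\omega) - nP(\Psi))$ equals $1$ for every $\omega$ and every cylinder, so $\mu$ is Gibbs for $\Psi$ with the uniform constant $K = 1$.

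The first thing I would establish is a uniform subexponential comparison between $\Psi$ and $\Phi$. Taking logarithms in the weak Gibbs inequalities, for every $\omega \in C_{i_1\dots i_n}$ one gets
\[
-\log K(n) \le \psi_n(\omega) - \phi_n(\omega) + nP(\Phi) \le \log K(n),
\]
so that $\| \psi_n - \phi_n + nP(\Phi) \| \le \log K(n)$ with $\tfrac{1}{n}\log K(n) \to 0$. Writing $\widetilde\phi_n := \phi_n - nP(\Phi)$, the sequence $\widetilde\Phi := (\widetilde\phi_n)_n$ is asymptotically additive (subtracting $S_n$ of the constant $P(\Phi)$ only shifts the functions $\rho_\epsilon$ by a constant), and $\tfrac{1}{n}\| \psi_n - \widetilde\phi_n \| \to 0$. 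To verify asymptotic additivity of $\Psi$, fix $\epsilon > 0$, take the function $\rho_\epsilon$ witnessing \eqref{aymad} for $\widetilde\Phi$, and use the triangle inequality
\[
\tfrac{1}{n}\| \psi_n - S_n\rho_\epsilon \| \le \tfrac{1}{n}\| \psi_n - \widetilde\phi_n \| + \tfrac{1}{n}\| \widetilde\phi_n - S_n\rho_\epsilon \| ;
\]
passing to the $\limsup$, the first term vanishes and the second is $< \epsilon$, so the same $\rho_\epsilon$ works for $\Psi$.

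It remains to compute $P(\Psi)$. Because $\tfrac{1}{n}\| \psi_n - \widetilde\phi_n \| \to 0$, inserting the pointwise bounds $\exp(\pm\|\psi_n - \widetilde\phi_n\|)$ into the separated-set sums, exactly as in the proof of Lemma \ref{approx}, shows $P(\Psi) = P(\widetilde\Phi)$. Finally, subtracting the constant $P(\Phi)$ from each $\tfrac1n\phi_n$ lowers the pressure by $P(\Phi)$, giving $P(\widetilde\Phi) = P(\Phi) - P(\Phi) = 0$. This closes the argument, since, as noted above, the Gibbs estimate then holds with $K=1$.

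I do not expect a genuine obstacle here: the whole point is that defining $\psi_n$ directly from $\mu$ makes the Gibbs ratio identically one, so the measure-theoretic content collapses. The only steps requiring care are bookkeeping ones, namely checking that $\Psi$ inherits asymptotic additivity from $\Phi$ through the subexponential error $\log K(n)$, and confirming that subtracting $nP(\Phi)$ normalizes the pressure to zero, both of which follow the separated-set estimates already used in Lemma \ref{approx}.
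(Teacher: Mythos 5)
Your proof is correct, and its skeleton coincides with the paper's: same choice $\psi_n(\omega):=\log\mu(C_{i_1\dots i_n})$, same logarithmic comparison $\|\psi_n-\phi_n+nP(\Phi)\|\le\log K(n)$, same triangle-inequality verification that $\Psi$ is asymptotically additive (the paper uses the witness $\rho_k+P(\Phi)$, which is your shifted $\rho_\epsilon$ in disguise), and the same conclusion that $P(\Psi)=0$ forces the Gibbs ratio to equal $1$. The one step where you genuinely diverge is the computation of $P(\Psi)$. The paper invokes its Proposition \ref{defpressure}, expressing $P(\Psi)$ as $\lim_{n\to\infty}\frac1n\log\sum_{\omega\in\text{Fix}(\sigma^n)}\exp(\psi_n(\omega))$ and sandwiching this periodic-orbit sum between $e^{\pm\log K(n)}e^{-nP(\Phi)}\sum_{\omega\in\text{Fix}(\sigma^n)}\exp(\phi_n(\omega))$; this is precisely where the topological mixing hypothesis enters, via Ruelle's theorem \eqref{ruelle}. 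You instead compare separated-set sums directly: writing $\widetilde\phi_n:=\phi_n-nP(\Phi)$, the bound $\frac1n\|\psi_n-\widetilde\phi_n\|\le\frac{\log K(n)}{n}\to0$ makes the partition sums for $\Psi$ and $\widetilde\Phi$ agree up to the subexponential factor $K(n)^{\pm1}$, so $P(\Psi)=P(\widetilde\Phi)=P(\Phi)-P(\Phi)=0$ by the Lemma \ref{approx} mechanism. Both arguments are valid; yours is more elementary (no periodic-point theory) and in fact proves the theorem on an arbitrary subshift, mixing being used in the paper only through its choice of Proposition \ref{defpressure} --- a proposition the authors presumably wanted for its own sake. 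Your supplementary checks --- that the weak Gibbs lower bound makes $\mu$ positive on cylinders, so $\psi_n$ is finite, and that $\psi_n$ is locally constant on clopen cylinders, hence continuous, as required for $\Psi$ to be a sequence of continuous functions --- are bookkeeping the paper leaves implicit (cf.\ Remark \ref{rem:regularity}).
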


\begin{proof}
Since the sequence $\Phi$ is asymptotically additive,  for every  $k \in \N$ there exists a continuous function $\rho_k:\Sigma \to \R$ such that
\[ \limsup_{n \to \infty} \frac{1}{n} \| \phi_n - S_n \rho_k \| \leq \frac{1}{k}. \]
Let us consider the following sequence of functions $ \Psi=(\psi_n)$, where $\psi_n:\Sigma \to \R$ is defined for $\omega \in C_{i_1 \dots i_n}$ by $\psi_n(\omega):= \log \mu(C_{i_1 \dots i_n})$.  Note that for every $\omega \in \Sigma$ we have that
\begin{equation} \label{psi_phi}
- \log K(n) \leq \psi_n(\omega) - \phi_n(\omega) +nP(\Phi)  \leq \log K(n).
\end{equation}
Indeed, since the measure $\mu$ is weak Gibbs for $\Phi$  equation \eqref{psi_phi} simply follows applying logarithm to the following inequalities,
\begin{equation*}
\frac{1}{K(n)} \leq \frac{\mu(C_{i_1 \dots i_n})}{\exp(-nP(\Phi) + \phi_n(\omega))}  \leq K(n).
\end{equation*}
Moreover,  the sequence  $\Psi=(\psi_n)_n$ is asymptotically additive. Indeed, it follows from equation \eqref{psi_phi} and the definition of weak Gibbs measure that for every $k \in \N$ the continuous function $(\rho_k +P(\Phi)) :\Sigma \to \R$, defined by $(\rho_k +P(\Phi))(\omega)= \rho_k(\omega) +P(\Phi)$,  is such that
\begin{eqnarray*}
 \limsup_{n \to \infty} \frac{1}{n}   \| \psi_n - S_n (\rho_k +P(\Phi)) \| \leq &\\  \limsup_{n \to \infty} \frac{1}{n}  \| \psi_n - \phi_n  + n P(\Phi)\|  +   \limsup_{n \to \infty} \frac{1}{n}  \| \phi_n - S_n \rho_k \|  \leq &\\
 \limsup_{n \to \infty} \frac{\log K(n)}{n} + \frac{1}{k} = \frac{1}{k}.
\end{eqnarray*}
It now follows from Proposition \ref{defpressure} and the definition of weak Gibbs measure that
\begin{eqnarray*}
P(\Psi)= \lim_{n \to \infty} \frac{1}{n} \log \sum_{ \omega \in \text{Fix}(\sigma^n)} \exp(\psi_n(\omega)) =&\\
\lim_{n \to \infty} \frac{1}{n} \log \sum_{\omega \in \text{Fix}(\sigma^n)} \mu(C_{i_1 \dots i_n} )\leq
 \lim_{n \to \infty} \frac{1}{n}  \log \sum_{ \omega \in \text{Fix}(\sigma^n)} \exp(-n P(\Phi)+ \phi_n (\omega)) K(n)  =&\\
\lim_{n \to \infty} \frac{\log K(n)}{n} -P(\Phi)  +  \lim_{n \to \infty} \frac{1}{n}  \log \sum_{\omega \in \text{Fix}(\sigma^n)} \exp(\phi_n (\omega)) =&\\
\lim_{n \to \infty} \frac{\log K(n)}{n} -P(\Phi) +P(\Phi)=0.
\end{eqnarray*}
And on the other hand
\begin{eqnarray*}
P(\Psi)= \lim_{n \to \infty} \frac{1}{n} \log \sum_{\omega \in \text{Fix}(\sigma^n)} \exp(\psi_n(\omega)) =&\\
\lim_{n \to \infty} \frac{1}{n} \log \sum_{\omega \in \text{Fix}(\sigma^n)} \mu(C_{i_1 \dots i_n} )\geq
 \lim_{n \to \infty} \frac{1}{n}  \log \sum_{\omega \in \text{Fix}(\sigma^n)} \exp(-nP(\Phi) + \phi_n (\omega)) K(n)^{-1}  =&\\
\lim_{n \to \infty} -\frac{\log K(n)}{n}-P(\Phi) +  \lim_{n \to \infty} \frac{1}{n}  \log \sum_{\omega \in \text{Fix}(\sigma^n)} \exp( \phi_n (\omega))=&\\
\lim_{n \to \infty} -\frac{\log K(n)}{n} -P(\Phi) +P(\Phi)=0.
\end{eqnarray*}
Therefore  $P(\Psi)=0$. Thus, for every $n \in \N$,  every cylinder $C_{i_1 \dots i_n}$ and every  $\omega \in
C_{i_1 \dots i_n}$ we have
\begin{equation*}
\frac{\mu(C_{i_1 \dots i_{n}})}{\exp(\psi_n(\omega) -n P(\Psi))} =1.
\end{equation*}
Therefore,  the measure $\mu$ is Gibbs with respect to $\Psi$.
\end{proof}

\begin{coro}
Every weak Gibbs measure $\mu$ for a continuous  potential $\phi$ on $\Sigma$ is a Gibbs measure for an
\emph{asymptotically additive} sequence $\Psi:=(\psi_n)_n$ on $\Sigma$.
\end{coro}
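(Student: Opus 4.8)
The plan is to reduce the corollary directly to Theorem \ref{main-weak} by realizing the continuous potential $\phi$ as the first term of a suitable asymptotically additive sequence. First I would set $\phi_n := S_n \phi$ and consider the sequence $\Phi := (\phi_n)_n$. This sequence is in fact additive, since $S_{n+m}\phi(\omega) = S_n\phi(\omega) + S_m\phi(\sigma^n\omega)$ for every $\omega \in \Sigma$; in particular it is almost additive (with constant $C=0$) and hence asymptotically additive. Indeed, taking the constant choice $\rho_\epsilon := \phi$ for every $\epsilon>0$ witnesses equation \eqref{aymad}, because $\phi_n - S_n\rho_\epsilon \equiv 0$ so that the relevant $\limsup$ vanishes.

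Next I would verify that the notion of weak Gibbs measure for the single potential $\phi$ in the sense of Definition \ref{d.wg} coincides with the notion of weak Gibbs measure for the sequence $\Phi$. After substituting $\phi_n = S_n\phi$ the two pairs of inequalities have identical shape, so the only point to check is that the constant $P$ appearing in Definition \ref{d.wg} agrees with the non-additive pressure $P(\Phi)$. By Kesseb\"ohmer's result (Proposition \ref{bound}) the constant $P$ may be taken equal to the single-potential pressure $P(\phi)$. On the other hand, applying Lemma \ref{approx} to $\Phi$ with the witness $\rho_\epsilon = \phi$ gives $P(\Phi) = \lim_{\epsilon \to 0} P(\rho_\epsilon) = P(\phi)$. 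Hence $P = P(\Phi)$ and the two definitions literally coincide, so $\mu$ is a weak Gibbs measure for the asymptotically additive sequence $\Phi$.

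With this identification in hand I would simply invoke Theorem \ref{main-weak}: since $\mu$ is a weak Gibbs measure for the asymptotically additive sequence $\Phi$ on the topologically mixing Markov shift $(\Sigma,\sigma)$, there exists an asymptotically additive sequence $\Psi = (\psi_n)_n$ on $\Sigma$ for which $\mu$ is Gibbs, which is precisely the assertion of the corollary.

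The argument is essentially immediate once Theorem \ref{main-weak} is available, so there is no genuine obstacle; the only step requiring care is the bookkeeping of the second paragraph, namely confirming the pressure identity $P = P(\phi) = P(\Phi)$ that makes the single-potential weak Gibbs condition coincide with its sequence version. One could bypass Kesseb\"ohmer's normalization of $P$ and argue straight from the pressure definitions that the single-potential topological pressure equals the non-additive pressure of the additive sequence $(S_n\phi)_n$, but routing this through Lemma \ref{approx} is the cleanest path.
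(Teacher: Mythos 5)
Your proposal is correct and follows essentially the same route as the paper, whose entire proof is to take $\Phi=(S_n\phi)_n$ in Theorem \ref{main-weak}; your extra bookkeeping (the trivial witness $\rho_\epsilon=\phi$ and the identification $P=P(\phi)=P(\Phi)$) just makes explicit what the paper leaves implicit. The only slight imprecision is citing Proposition \ref{bound} for the constant $P$ of an \emph{arbitrary} weak Gibbs measure (that proposition concerns the constructed measure), but as you note the identity $P=P(\phi)$ follows directly by summing the weak Gibbs inequalities over all cylinders of length $n$, so the argument stands.
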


\begin{proof}
The result follows considering $\Phi=(S_n\phi)_n$ in Theorem \ref{main-weak}.
\end{proof}

Recall that an almost additive sequence $\Phi$ is an asymptotically additive sequence and thus it has
tempered variation (see Remark \ref{reg} and \cite[Proposition 2.1]{zzc}). Therefore there always exists a weak Gibbs measure for $\Phi$.

\begin{coro}
Let $\mu$ be a weak Gibbs measure for an almost additive sequence $\Phi:=(\phi_n)_n$. Then there exists an asymptotically additive sequence $\Psi:=(\psi_n)_n$ such that the measure $\mu$ is Gibbs with respect to $\Psi$.
\end{coro}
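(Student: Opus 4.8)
The plan is to reduce this statement directly to Theorem \ref{main-weak}. The hypothesis of that theorem requires $\Phi$ to be \emph{asymptotically} additive, whereas here we are only given that $\Phi$ is \emph{almost} additive; so the first step is simply to recall that every almost additive sequence is asymptotically additive. This inclusion is recorded in the preliminaries (see \cite[Proposition 2.1]{zzc} and Remark \ref{reg}), so no genuine work is needed at this stage.

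Next I would verify that the meaning of ``weak Gibbs measure'' is unchanged when we pass from the almost additive viewpoint to the asymptotically additive one. This is immediate: the defining inequalities of a weak Gibbs measure are stated purely in terms of the functions $\phi_n$, the constants $K(n)$ satisfying $\lim_{n\to\infty}\log K(n)/n = 0$, and the topological pressure $P(\Phi)$. Since $P(\Phi)$ is defined via $(n,\epsilon)$-separated sets independently of any additivity assumption, the measure $\mu$ is a weak Gibbs measure for $\Phi$ regarded as an asymptotically additive sequence, with exactly the same constants $K(n)$ and the same value $P(\Phi)$.

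With these two observations in hand, the conclusion follows by applying Theorem \ref{main-weak} to $\Phi$: it produces an asymptotically additive sequence $\Psi=(\psi_n)_n$, namely the sequence given by $\psi_n(\omega) := \log \mu(C_{i_1\dots i_n})$ for $\omega \in C_{i_1\dots i_n}$, with respect to which $\mu$ is a Gibbs measure. I do not expect any real obstacle here; the only point requiring attention is the bookkeeping of the first two paragraphs, confirming that the hypotheses of Theorem \ref{main-weak} are indeed met, after which the result is immediate.
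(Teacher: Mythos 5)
Your proposal is correct and follows essentially the same route as the paper: the paper's proof simply observes (citing \cite[Proposition 2.1]{zzc} and Remark \ref{reg}) that every almost additive sequence is asymptotically additive, so the corollary is contained in Theorem \ref{main-weak}. Your additional check that the weak Gibbs condition and the pressure $P(\Phi)$ are unchanged under this reinterpretation is sound bookkeeping, just made explicit rather than left implicit as in the paper.
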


\begin{proof}
The result is contained in Theorem \ref{main-weak}.
\end{proof}

\begin{rem}[Regularity] \label{rem:regularity} Note that the sequence $\Psi=(\psi_n)_n$ constructed in Theorem \ref{main-weak} is such that every function $\psi_n:\Sigma \to \R$ is locally constant on cylinders of length $n$.
\end{rem}

\begin{rem}
Let $\phi$ be a continuous potential for which there exists a Gibbs measure $\mu$. Then the sequence of functions  $\Psi$ in Theorem \ref{main-weak} is almost additive. Without loss of generality we can assume that $P(\phi)=0$. Recall that there exists a constant $C>0$ such that
\begin{equation*}
\frac{1}{C} \exp(S_{n+m}\phi(\omega)) \leq \mu(C_{i_1 \dots i_{n+m}}) \leq C \exp(S_{n+m}\phi(\omega)) .
\end{equation*}
Note that
\begin{eqnarray*}
\psi_{n+m}(\omega)  - \psi_n(\omega) -\psi_{m}(\sigma^n \omega) = \log \frac{\mu(C_{i_1 \dots i_{n+m}}) }{\mu(C_{i_1 \dots i_{n}}) \mu(C_{i_n \dots i_{n+m}}) }  \leq &\\
\log \frac{C \exp S_{n+m}\phi(\omega)}{C^{-2}\exp S_{n}\phi(\omega)\exp S_{m}\phi(\sigma^n \omega) } =3 \log C.
\end{eqnarray*}
Moreover,
\begin{eqnarray*}
\psi_{n+m}(\omega)  - \psi_n(\omega) -\psi_{m}(\sigma^n \omega) = \log \frac{\mu(C_{i_1 \dots i_{n+m}}) }{\mu(C_{i_1 \dots i_{n}}) \mu(C_{i_n \dots i_{n+m}}) }  \geq &\\
\log \frac{C^{-1} \exp S_{n+m}\phi(\omega)}{C^{2}\exp S_{n}\phi(\omega)\exp S_{m}\phi(\sigma^n \omega) } =-3 \log C.
\end{eqnarray*}
Therefore the sequence $\Psi$ is almost additive. Now, if the measure $\mu$ is weak Gibbs but not Gibbs then the sequences $\Psi$ is asymptotically additive but not almost additive.
\end{rem}

\begin{rem}
It should be stressed that the results in Theorem \ref{main-weak} and in its Corollaries the Gibbs condition is satisfied for a constant $C=1$, thus we obtain equalities instead of inequalities in the Gibbs definition.
\end{rem}

\begin{rem} \label{rem:paulo}
In \cite{vz} Varandas and Zaho studied Large deviations for weak Gibbs measures for non-additive sequences of potentials.  Theorem \ref{main-weak} shows that in their results it is only necessary to assume that the measure is indeed a Gibbs measure for the appropriate sequence.
\end{rem}

%
%

\section{Multifractal analysis}

In this section we apply several results on  dimension theory of non-additive sequences to the study of multifractal analysis of conformal dynamical systems. It is worth stressing that while  non-additive thermodynamic formalism was originally  developed to study dimension theory on non-conformal settings, it has interesting and powerful applications in the conformal setting. Multifractal analysis is the study of level sets determined by (dynamically defined) local quantities. In general, the geometry of the level sets is rather complicated and in order to quantify its size Hausdorff dimension is used. Combining the results in Section \ref{wgaas} with those recently obtained by Barreira, Cao and Wang \cite{bcw} on multifractal analysis of quotients of asymptotically additive sequences,  we obtain new results and recover old ones in multifractal analysis for one-dimensional uniformly and non-uniformly hyperbolic dynamical systems. The class of dynamical systems that we will study is the following,
\begin{defi} \label{maps}
Let $I=[0,1]$ and consider a finite  family $\{ I_i \}_{i=1}^k$, of disjoint closed intervals contained in I. A map $T:\cup_{i=1}^kI_i \to I $ is an \emph{expanding Markov map} if
\begin{enumerate}
\item the map is $C^1$;
\item the map $T$ is Markov and it can be coded by a topologically mixing sub-shift of finite on the alphabet $\{1, \dots, k\}$;
\item for every $x \in   \cup_{i=1}^kI_i $ we have that $|T'(x)| \geq 1$ and there exists, at most, finitely many points for which $|T'(x)|=1$.
\end{enumerate}
\end{defi}
The \emph{repeller} of such a map is defined by
\[\Lambda:=\left\{x \in \cup_{i=1}^{\infty} I_i: T^n(x) \textrm{ is well defined for every } n \in \mathbb{N} \right\}.\]
The Markov structure assumed for expanding Markov maps $T$, allows for a good symbolic representation. This means that there exists a topologically mixing sub-shift of finite type $(\Sigma, \sigma)$ defined on the alphabet $\{1, \dots, k \}$
with the following property: there exists an homeomorphism, the natural projection,  $\pi :\Sigma \to \Lambda$ such that $\pi \circ \sigma = T  \circ  \pi$.

When coding a dynamical system with a symbolic one it is often desirable to translate the original metric properties onto the symbolic space. A standard way to do it is to associate to each symbolic cylinder of length $n$ the diameter of its projection.
Let $I(i_1, \dots i_n) := \pi (C_{i_1 \dots i_n})$, with a slight abuse of notation we call that set a \emph{cylinder} of length $n$ for $T$. For every $n \in \N$ we define the function $D_n: \Sigma \to \R$ by  $D_n(\omega):= \text{diam} ( I(i_1, \dots i_n))$, where $I(i_1, \dots i_n)$ is the cylinder of length $n$ in $[0,1]$ containing the point $\pi(\omega)$. We denote the sequence of functions obtained in this way by  $D:=(D_n)_n$. Urba\'nksi \cite{u} and also Jordan and Rams \cite{jr} noted (using a different language though)  that,

\begin{lema} [Urba\'nksi, Jordan-Rams] \label{ujr}
If $T$ is a expanding Markov map then the sequence of functions  $\log D:=( \log D_n)_n$ is asymptotically additive.
\end{lema}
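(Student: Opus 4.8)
The plan is to show that the sequence $\log D = (\log D_n)_n$ is asymptotically additive by comparing it directly to the Birkhoff sums of the geometric potential $-\log|T'|$. The key heuristic is that for an expanding Markov map, the diameter of a cylinder is controlled, via the Mean Value Theorem, by the derivative of the appropriate iterate of $T$ at an interior point. Concretely, for $\omega \in C_{i_1 \dots i_n}$ the cylinder $I(i_1,\dots,i_n)$ is an interval on which $T^n$ is a homeomorphism onto its image, so there exists a point $\xi \in I(i_1,\dots,i_n)$ with
\[
D_n(\omega) = \operatorname{diam}(I(i_1,\dots,i_n)) = \frac{\operatorname{diam}(T^n I(i_1,\dots,i_n))}{|(T^n)'(\xi)|}.
\]
Since the numerator is bounded above by $\operatorname{diam}(I)=1$ and below by a positive constant (the images are full cylinders in a finite Markov system, so there are finitely many possible image sets), taking logarithms yields $\log D_n(\omega) = -\log|(T^n)'(\xi)| + O(1)$, and by the chain rule $\log|(T^n)'(\xi)| = \sum_{j=0}^{n-1} \log|T'(T^j\xi)| = S_n(\log|T'|)(\sigma^j\omega)$ evaluated along the orbit of $\xi$.

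First I would make the comparison rigorous by choosing, for each cylinder, a specific reference point and controlling the error between $\log D_n(\omega)$ and $-S_n(\log|T'|)(\omega)$ uniformly. The obstacle here is precisely the points where $|T'(x)|=1$: near these neutral points $\log|T'|$ is still continuous (and in fact $\log|T'| \geq 0$ everywhere by hypothesis (3)), but the distortion of $T^n$ across a cylinder need not be uniformly bounded, so one cannot expect $\log D$ to be \emph{almost} additive. This is exactly why the statement only claims asymptotic additivity. To handle this I would fix $\epsilon>0$ and, rather than comparing to the single potential $-\log|T'|$, produce a continuous function $\rho_\epsilon$ witnessing the asymptotic additivity condition \eqref{aymad}. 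A natural candidate is $\rho_\epsilon = -\log|T'|$ itself, and the task reduces to proving
\[
\limsup_{n\to\infty} \frac{1}{n} \bigl\| \log D_n - S_n(-\log|T'|) \bigr\| = 0,
\]
which would in fact show $\log D$ is asymptotically additive with a \emph{single} approximating function, independent of $\epsilon$.

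The heart of the argument, and the step I expect to be the main obstacle, is the bound
\[
\sup_{\omega \in C_{i_1\dots i_n}} \bigl| \log D_n(\omega) + S_n(\log|T'|)(\omega) \bigr| = o(n) \quad \text{uniformly in the cylinder.}
\]
Writing $D_n(\omega) = |(T^n)'(\xi)|^{-1}\operatorname{diam}(T^n I(i_1,\dots,i_n))$ with $\xi$ in the cylinder, the discrepancy from $-S_n(\log|T'|)(\omega)$ splits into the bounded image term and the distortion term $\sum_{j=0}^{n-1}\bigl(\log|T'(T^j\xi)| - \log|T'(\sigma^j\omega\text{-orbit})|\bigr)$. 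Away from the neutral points the map is uniformly expanding and classical bounded-distortion estimates give a uniform $O(1)$ bound; the subtlety is that orbit segments may spend long stretches near a neutral fixed point, where expansion degenerates. The standard device is to exploit that $\log|T'|$ is uniformly continuous on the compact repeller, so that $\operatorname{var}_n(\log|T'|)\to 0$, and to combine this with the fact that $D_n(\omega)\to 0$ as $n\to\infty$ (cylinders shrink), which forces the cumulative distortion to grow sublinearly. This is the only place real analysis intervenes; once the $o(n)$ estimate is in hand, dividing by $n$ and taking $\limsup$ gives \eqref{aymad} with $\rho_\epsilon = -\log|T'|$, establishing that $\log D$ is asymptotically additive.
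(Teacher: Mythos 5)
Your proposal is correct, and at its core it follows the same route as the paper: both reduce asymptotic additivity to a single uniform estimate $\lim_{n\to\infty}\frac{1}{n}\|\log D_n - S_n\rho\| = 0$ with one fixed potential $\rho$ (the symbolic representation of $-\log|T'|$) serving as $\rho_\epsilon$ for every $\epsilon$ simultaneously. The difference is one of self-containedness: the paper simply quotes \cite[Lemma 1]{jr}, which asserts exactly the existence of a sequence $M(n)\to 0$ with $e^{-nM(n)}\le D_n(\omega)e^{S_n(\log|T'|\circ\pi)(\omega)}\le e^{nM(n)}$, and then divides by $n$; your mean-value-theorem-plus-distortion argument is in substance a proof of that quoted lemma. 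Three remarks on your version. First, your side remark that ``classical bounded-distortion estimates give a uniform $O(1)$ bound'' away from the neutral points is not available for maps that are merely $C^1$ (bounded distortion requires a summable modulus of continuity for $\log|T'|$, e.g.\ H\"older); fortunately it is not load-bearing, since your actual mechanism --- $\bigl|\log|T'(T^j\xi)|-\log|T'(T^jx)|\bigr|\le \operatorname{var}_{n-j}(\log|T'|)$, whence the cumulative distortion is at most $\sum_{k=1}^{n}\operatorname{var}_k(\log|T'|)=o(n)$ by Ces\`aro averaging --- works under $C^1$ alone. Second, for that mechanism you need the diameters of length-$n$ cylinders to shrink to zero \emph{uniformly} over all cylinders, not merely $D_n(\omega)\to 0$ pointwise; this is supplied by the standing assumption that $\pi:\Sigma\to\Lambda$ is a homeomorphism together with compactness, and is worth saying explicitly. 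Third, your lower bound on $\operatorname{diam}(T^n I(i_1,\dots,i_n))$ is justified correctly: by the Markov property this image is one of finitely many sets, determined by $i_n$, each of positive diameter. Finally, note that your sign convention is the right one: the paper's proof sets $\gamma=\log|T'|\circ\pi$ and writes the estimate as $e^{-nM(n)}\le D_n(\omega)e^{-S_n\gamma(\omega)}\le e^{nM(n)}$, which can only hold with $\gamma=-\log|T'|\circ\pi$ (i.e.\ comparing $\log D_n$ with $-S_n\log|T'|$, as you do) --- an evident sign slip there that your computation silently corrects.
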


\begin{proof}
From \cite[Lemma 1]{jr} we have that if $\gamma:= \log|T'| \circ \pi$ (that is the symbolic representation of the logarithm of the derivative)  then there exists a sequence of positive real numbers $M(n)$ with $\lim_{n \to \infty} M(n)=0$ such that for every $\omega \in \Sigma$ we have
\begin{equation*}
e^{-n M(n)}  \leq D_n(\omega
) e^{-S_n \gamma(\omega)} \leq e^{n M(n)}.
\end{equation*}
Thus,
\begin{equation*}
\frac{1}{n} \| \log D_n - S_n \gamma \| \leq M(n)
\end{equation*}
Since $\lim_{n \to \infty} M(n)=0$ the result follows.
\end{proof}

We now define some of he local quantities that we will consider. Let $\mu$ be a probability measure on $[0,1]$. The \emph{pointwise dimension} of $\mu$ at the point $x \in (0,1)$ is defined by
\begin{equation*}
d_{\mu}(x):= \lim_{r \to 0} \frac{\log \mu((x-r,x+r))}{\log r},
\end{equation*}
whenever the limit exists. This function describes the power law behaviour of the measure for small intervals and has been extensively studied over the last decade (see for example \cite{b4}). For a weak Gibbs measure $\mu$ the pointwise dimension can be computed at a symbolic level (see \cite[Lemma 8]{jr}).

\begin{lema} \label{dp} Let $T$ be an expanding Markov map and $\mu$ a weak Gibbs measure
corresponding to the continuous potential $\phi$. Denote by $\overline{\mu}:= \mu \circ \pi$, then whenever the pointwise
dimension of $\mu$ exists we have
\begin{equation*}
d_{\mu}(x):= \lim_{r \to 0} \frac{\log \mu((x-r,x+r))}{\log r}=-\lim_{n \to \infty} \frac{\log \overline{\mu}(C_{i_1 \dots i_n})}{\log D_n(x)},
\end{equation*}
where $C_{i_1 \dots i_n}$ is the cylinder of length $n$ containing $\pi^{-1}x$.
\end{lema}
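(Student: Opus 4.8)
The plan is to reduce both sides to the interval level and then squeeze the pointwise dimension, computed with honest balls, between the values read off along the nested cylinders that contain $x$. First I would record the identification: since $\pi$ is a homeomorphism with $\pi(C_{i_1\dots i_n}) = I(i_1,\dots,i_n)$, writing $I_n(x):=I(i_1,\dots,i_n)$ for the interval of length $n$ containing $x$ we have $\overline{\mu}(C_{i_1\dots i_n}) = \mu(I_n(x))$ and $\operatorname{diam} I_n(x) = D_n(x)$. Since $\mu(I_n(x))$ and $D_n(x)$ both tend to $0$, the quotient $\log\mu(I_n(x))/\log D_n(x)$ is a ratio of negatives, and the content of the statement is the equality of logarithmic quotients
\[ \lim_{r\to 0}\frac{\log\mu\big(B(x,r)\big)}{\log r} \;=\; \lim_{n\to\infty}\frac{\log\mu(I_n(x))}{\log D_n(x)}. \]

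For the squeeze I would introduce the inradius $\delta_n(x)$, the distance from $x$ to the boundary of the interval $I_n(x)$, which yields the elementary inclusions $B(x,\delta_n(x)) \subseteq I_n(x) \subseteq B(x, D_n(x))$. Taking $-\log$ and dividing by $-\log D_n(x)>0$, the right inclusion gives
\[ \frac{\log\mu(I_n(x))}{\log D_n(x)} \;\ge\; \frac{\log\mu\big(B(x,D_n(x))\big)}{\log D_n(x)} \xrightarrow[n\to\infty]{} d, \]
where $d:=d_\mu(x)$ is the assumed pointwise dimension and I use that, the limit defining $d$ existing, it is attained along every radius sequence tending to $0$, in particular along $D_n(x)$; hence $\liminf_n \log\mu(I_n(x))/\log D_n(x)\ge d$ with no further work. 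The left inclusion gives, in the same way,
\[ \frac{\log\mu(I_n(x))}{\log D_n(x)} \;\le\; \frac{\log\mu\big(B(x,\delta_n(x))\big)}{\log \delta_n(x)}\cdot\frac{\log\delta_n(x)}{\log D_n(x)}, \]
whose first factor tends to $d$ (now along the radii $\delta_n(x)\to0$); so the matching upper bound $\limsup_n \log\mu(I_n(x))/\log D_n(x)\le d$ follows \emph{provided} $\lim_{n\to\infty} \log\delta_n(x)/\log D_n(x) = 1$.

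This last limit is the crux and the step I expect to be the main obstacle: it says the cylinders $I_n(x)$ are logarithmically round, so that balls and cylinders of comparable scale have comparable logarithmic size. I would establish it from the tempered distortion of Lemma \ref{ujr}. Since $T^{n-1}$ maps $I_n(x)$ bijectively onto the level-one interval $I_{\omega_n}$ and, by Lemma \ref{ujr} (with $\gamma=\log|T'|\circ\pi$ bounded because $T$ is $C^1$ on a compact set), does so with distortion $e^{\pm o(n)}$, both $D_n(x)$ and $\delta_n(x)$ are obtained, up to a subexponential factor $e^{\pm o(n)}$, by contracting $\operatorname{diam} I_{\omega_n}$ and the distance from $\pi(\sigma^{n-1}\omega)$ to $\partial I_{\omega_n}$ by the common factor coming from $(T^{n-1})'$; dividing, $\delta_n(x)/D_n(x)$ equals, up to $e^{\pm o(n)}$, the relative position of $\pi(\sigma^{n-1}\omega)$ inside $I_{\omega_n}$, and since $\log D_n(x)\to-\infty$ the desired limit follows as soon as this relative position does not decay superexponentially. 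The one genuinely delicate case is a point whose forward itinerary shadows the boundary of the Markov partition, where $\delta_n(x)$ can degenerate; but there the ball $B(x,r)$ and the cylinder coincide on the side of $x$ carrying the repeller, so the identity holds directly and the sandwich is simply run one-sidedly. Combining the two bounds then forces $\lim_n \log\mu(I_n(x))/\log D_n(x) = d$, which is the assertion.
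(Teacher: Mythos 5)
The paper itself offers no proof of this lemma --- it simply cites \cite[Lemma 8]{jr} --- so your attempt must be judged on its own merits. Your setup and the liminf half are correct, but the step you yourself flag as the crux is a genuine gap. The claim $\lim_{n\to\infty}\log\delta_n(x)/\log D_n(x)=1$ is false in the stated generality: $\delta_n(x)=0$ for all large $n$ whenever $x$ is a preimage of an endpoint of the Markov partition, and for points whose itinerary shadows the partition boundary for long stretches the relative position of $\pi(\sigma^{n-1}\omega)$ in its level-one interval can decay superexponentially while $d_\mu(x)$ still exists; your fallback (``the ball and the cylinder coincide on the side of $x$ carrying the repeller, run the sandwich one-sidedly'') is not an estimate and in particular does not handle points where $\delta_n(x)$ is positive but tiny infinitely often. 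The deeper problem is that $\delta_n(x)$ is the wrong quantity: what bounds $\mu(B(x,r))$ above by $\overline{\mu}(C_{i_1\dots i_n})$ is not the inradius of $I_n(x)$ but the distance from $x$ to $\Lambda\setminus I_n(x)$, and this is controlled by the \emph{gaps} of the Markov structure. Since $I_1,\dots,I_k$ are disjoint closed intervals, $I_n(x)$ is flanked either by pullbacks of level-one gaps under the same inverse branch --- of length at least $e^{-o(n)}D_n(x)$ by the tempered distortion underlying Lemma \ref{ujr} --- or by larger gaps inherited from earlier levels; hence $B\bigl(x,e^{-o(n)}D_n(x)\bigr)\cap\Lambda\subseteq I_n(x)$ no matter how close $x$ sits to $\partial I_n(x)$. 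Substituting this for your $\delta_n(x)$ repairs the upper half and makes your ``delicate case'' disappear, at the price of a correction factor $\log\bigl(e^{-o(n)}D_n(x)\bigr)/\log D_n(x)=1+o(n)/|\log D_n(x)|$.

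Even so repaired, the argument closes only when $|\log D_n(x)|$ grows linearly in $n$, i.e.\ essentially in the uniformly expanding case $\inf|T'|>1$, where it then works for an \emph{arbitrary} Borel measure. But Definition \ref{maps} permits $|T'|=1$ at finitely many points; near such a neutral point $D_n(x)$ can decay polynomially, so $|\log D_n(x)|=o(n)$ and the subexponential distortion and gap errors are no longer negligible against $\log D_n(x)$ --- the correction factor above need not tend to $1$. It is telling that your proof never invokes the hypothesis that $\mu$ is weak Gibbs: that hypothesis, giving $\log\overline{\mu}(C_{i_1\dots i_n})=S_n\phi(\omega)-nP\pm o(n)$ and hence comparability of the weights of neighbouring cylinders, is exactly what the cited proof in \cite[Lemma 8]{jr} exploits to handle this non-uniform regime, which is the whole point of the Jordan--Rams setting. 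In summary: correct skeleton and correct liminf bound, but the limsup bound needs the separation (gap) argument in place of the inradius, and in the full generality of Definition \ref{maps} it additionally needs the weak Gibbs property of $\mu$, without which your estimates cannot reach the parabolic case.
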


We can now state  our result in dimension theory. We  denote the Hausdorff dimension of a set by
$\dim_H(\cdot)$ (see \cite{f2} for  definition and properties). Recall from Section \ref{wgaas} that if $\mu_i$ is a weak Gibbs measure then we can define an
asymptotically additive sequence $\Psi^i:=(\psi^i_n)_n$ by $\psi^i_n(x)=\log \mu_i(C_{i_1 \dots i_n}(x))$.

\begin{teo} \label{mfa}
Let $T$ be an expanding Markov map and $(\mu_i)_{i=1}^r$ be non-atomic invariant weak Gibbs measures. Let $\overline{\alpha}= (\alpha_1, \alpha_2, \dots , \alpha_r) \in \R^r$ and consider the level sets defined by
\begin{equation*}
J(\overline{\alpha}):= \left\{ x \in [0,1] : \left( d_{\mu_1}(x), d_{\mu_2}(x) ,  \dots , d_{\mu_r}(x) \right)=\overline{\alpha}\right\}
\end{equation*}
Then,
\begin{eqnarray*}
\dim_H J(\overline{\alpha})= &\\ \sup \left\{ \frac{h(\nu)}{\int \log|T'| d\nu} : \lim_{n \to \infty}\frac{1}{n} \int \frac{\psi_n^i}{\log D_n} d \nu= \alpha_i, \text{ for all } i \in \{1, \dots ,r \} , \nu \in \M \right\}.
\end{eqnarray*}
\end{teo}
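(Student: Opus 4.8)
The plan is to transfer the problem to the symbolic space $(\Sigma,\sigma)$ and to recognise $J(\overline{\alpha})$ as a level set for the quotients of several asymptotically additive sequences by one common sequence, so that the conditional variational principle of Barreira, Cao and Wang \cite{bcw} can be applied. The two families of sequences that govern the problem are already available. By Theorem \ref{main-weak}, each invariant weak Gibbs measure $\mu_i$ is a Gibbs measure for the asymptotically additive sequence $\Psi^i=(\psi_n^i)_n$, where $\psi_n^i(\omega)=\log\mu_i(C_{i_1\dots i_n})$ is, by Remark \ref{rem:regularity}, constant on cylinders of length $n$; and by Lemma \ref{ujr} the sequence $\log D=(\log D_n)_n$ is asymptotically additive, with each $\log D_n$ also constant on cylinders of length $n$.

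First I would rewrite the level set symbolically. Since $\pi$ is a homeomorphism with $\pi\circ\sigma=T\circ\pi$, Lemma \ref{dp} shows that for $x=\pi(\omega)$ the equality $d_{\mu_i}(x)=\alpha_i$ is equivalent to $\lim_{n\to\infty}\psi_n^i(\omega)/\log D_n(\omega)=\alpha_i$, so that $\pi^{-1}(J(\overline{\alpha}))$ is precisely the set of $\omega$ at which all $r$ quotients $\psi_n^i/\log D_n$ converge to the prescribed $\alpha_i$; this is exactly the type of set treated in \cite{bcw}. Next I would identify the ingredients of the supremum. From the estimate in the proof of Lemma \ref{ujr} one has $\tfrac1n\|\log D_n-S_n\gamma\|\to0$ with $\gamma=\log|T'|\circ\pi$, so by asymptotic additivity together with Birkhoff's theorem $\tfrac1n\int\log D_n\,d\nu$ converges, for every $\nu\in\M$, to a limit equal, up to sign, to the Lyapunov exponent $\int\log|T'|\,d\nu$; and for ergodic $\nu$ both $\tfrac1n\psi_n^i$ and $\tfrac1n\log D_n$ converge $\nu$-almost everywhere, whence $\psi_n^i/\log D_n$ converges $\nu$-a.e.\ to the constant $(\lim_n\tfrac1n\int\psi_n^i\,d\nu)/(\lim_n\tfrac1n\int\log D_n\,d\nu)$. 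This is what makes the constraint in the supremum coincide with the requirement that $\nu$-generic points lie in $J(\overline{\alpha})$, and it explains why the quotient $h(\nu)/\int\log|T'|\,d\nu$ is the correct candidate for the dimension carried by $\nu$.

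Finally I would run the two halves of the variational principle. For the lower bound, given an ergodic $\nu$ meeting the constraints, $\nu$-almost every point belongs to $J(\overline{\alpha})$, so $\dim_H J(\overline{\alpha})\ge\dim_H\nu=h(\nu)/\int\log|T'|\,d\nu$ by the dimension formula for invariant measures of one-dimensional conformal repellers, and I would reach arbitrary $\nu$ by ergodic decomposition and approximation. The upper bound is the substantial part, and for it I would quote \cite{bcw} directly: once the two families of sequences have been shown to be asymptotically additive, their conditional variational principle, applied to the $r$ quotients $\psi_n^i/\log D_n$ simultaneously, supplies the matching estimate via a covering argument governed by the pressure of the relevant combinations of $\Psi^i$ and $\log D$, using that $P(\Psi^i)=0$ as in the proof of Theorem \ref{main-weak}. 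The point that makes the theorem nontrivial is that the $\mu_i$ are \emph{only} weak Gibbs and the $\psi_n^i$ only asymptotically additive, so neither classical Gibbs theory nor additive multifractal analysis applies; it is precisely the reduction of Section \ref{wgaas} that removes this obstruction and places us within the hypotheses of \cite{bcw}. The residual delicate point, which I expect to require the most care, is the non-uniform hyperbolicity permitted by Definition \ref{maps}: at the finitely many points with $|T'|=1$ the Lyapunov exponent degenerates, so $\int\log|T'|\,d\nu$ can vanish and the quotients $\psi_n^i/\log D_n$ need not be uniformly bounded. I would control this by using that the $\mu_i$ are non-atomic, so these points carry no mass, and by restricting attention to measures with $\int\log|T'|\,d\nu>0$, which is forced wherever the pointwise dimensions are finite.
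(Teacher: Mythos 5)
Your proposal is correct and takes essentially the same route as the paper: reduce to the symbolic setting, invoke Theorem \ref{main-weak} and Lemma \ref{ujr} to get asymptotic additivity of the sequences $\Psi^i$ and $\log D$, use Lemma \ref{dp} to identify $J(\overline{\alpha})$ as a level set of quotients of asymptotically additive sequences, and conclude by citing \cite[Theorem 1]{bcw}. The paper's proof is exactly this citation chain and nothing more; your extra sketch of the two halves of the conditional variational principle and your care about the finitely many points with $|T'(x)|=1$ are details the paper delegates entirely to \cite{bcw}.
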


\begin{proof}
Since $\mu_i$ is weak Gibbs we have that  for every $i \in \{1, \dots , r\}$ the sequences $\Psi^i:=(\psi_n^i)$ are  asymptotically additive (see Theorem \ref{main-weak}). Also since  $T$ is  an  expanding Markov map  the sequence $\log D$ is asymptotically additive (see Lemma \ref{ujr}). Moreover,  by Lemma \ref{dp}, we have
\[d_{\mu_i}(x)=-\lim_{n \to \infty} \frac{\psi^i_n(\omega)}{\log D_n(\omega)},\]
where $\pi(\omega)=x$. Thus, the pointwise dimension of $\mu_i$ at the point $x \in [0,1]$ is the quotient of two asymptotically additive sequences. The result now follows from \cite[Theorem 1]{bcw}.
\end{proof}

\begin{rem}
Note that what is relevant in the proof of Theorem \ref{mfa} is that the pointwise dimension of weak Gibbs measures can be computed as the limit of the quotient of two asymptotically additive sequences. The same type of result can be obtained considering, instead of pointwise dimension,  local entropies, Birkhoff averages, Lyapunov exponents or quotients of Birkhoff averages.
\end{rem}


\begin{thebibliography}{11}


\bibitem[B1]{b1}  L.\ Barreira,   \emph{A non-additive thermodynamic formalism and applications to dimension theory of hyperbolic dynamical systems,} Ergodic Theory Dynam. Systems 16 (1996), 871--927.

\bibitem[B2]{b2} L.\ Barreira, \emph{Nonadditive thermodynamic formalism: equilibrium and Gibbs measures,}  Discrete Contin. Dyn. Syst. 16 (2006), 279--305.

\bibitem[B2]{b3} L.\ Barreira, \emph{Almost additive thermodynamic formalism: some recent developements} Rev. Math. Phys. 22 (2010), 1147--1179.

\bibitem[B3]{b4} L.\ Barreira, \emph{Dimension and recurrence in hyperbolic dynamics,} Progress in Mathematics, 272. Birkh\"auser Verlag, Basel, 2008.




\bibitem[B4]{b5}  L.\ Barreira,  \emph{Thermodynamic formalism and applications to dimension theory.} Progress in Mathematics, 294. BirkhŠuser/Springer Basel AG, Basel, 2011. xii+297 pp.


\bibitem[BCW]{bcw} L.\ Barreira, Y.\ Cao and J.\ Wang  \emph{Multifractal Analysis of Asymptotically Additive Sequences}   J.Stat. Phys.  153 (2013) 888--910.



\bibitem[Bow]{bow} R. Bowen \emph{Equilibrium states and the ergodic theory of Anosov diffeomorphisms.}  Second revised edition. With a preface by David Ruelle. Edited by Jean-RenŽ Chazottes. Lecture Notes in Mathematics, 470. Springer-Verlag, Berlin, 2008. viii+75 pp

%

\bibitem[CFH]{cfh} Y.\ Cao, D.J.\ Feng and  W.\ Huang, \emph{The thermodynamic formalism for sub-additive potentials}, Discrete Contin. Dyn. Syst. 20 (2008), 639--657.

%
\bibitem[CZC]{czc} W-C. \ Cheng, Y. \ Zhao and Y. \ Cao \emph{Pressure for asymptitotically sub-addtive potentials under a mistake function}
Discrete Contin. Dyn. Syst. 2 (2012), 487--497.

\bibitem[Fa1]{f}  K. J.\ Falconer,  \emph{A subadditive thermodynamic formalism for mixing repellers.} J. Phys. A 21 (1988), no. 14, 737--742.

\bibitem[Fa2]{f2}   K. J.\ Falconer, \emph{Fractal geometry. Mathematical foundations and applications.} Third edition. John Wiley and Sons, Ltd., Chichester, 2014. xxx+368 pp


\bibitem[FH]{fh}  D.J.\ Feng and W.\ Huang, \emph{Lyapunov spectrum of asymptotically sub-additive potentials.} Comm. Math. Phys. 297 (2010), no. 1, 1--43.

%
%


\bibitem[IR-RL]{ij} I. Inoquio-Renteria and J. Rivera-Letelier, \emph{A characterization of hyperbolic potentials of rational maps.} Bull. Braz. Math. Soc. (N.S.) 43 (2012), no. 1, 99--127.

\bibitem[IY]{iy} G.\ Iommi and Y.\ Yayama, \emph{Almost-additive thermodynamic formalism for countable Markov shifts.}
Nonlinearity 25 (2012), no. 1, 165--191.



\bibitem[JR]{jr} T.\ Jordan and M.\ Rams \emph{Multifractal analysis of weak Gibbs measures for non-uniformly expanding C1 maps.} Ergodic Theory Dynam. Systems 31 (2011), no. 1, 143--164.



\bibitem[K]{ke} M. Keane, \emph{Strongly mixing g-measures} Invent. Math. 16 (1972), 309--324.


\bibitem[Ke]{k} M.\ Kesseb\"ohmer, \emph{Large deviation for weak Gibbs measures and multifractal spectra.} Nonlinearity 14 (2001), no. 2, 395--409.


\bibitem[M]{m} A.\ Mummert,  \emph{The thermodynamic formalism for almost-additive sequences.} Discrete Contin. Dyn. Syst. 16 (2006), no. 2, 435--454.





\bibitem[Ru]{ru}  D. Ruelle, \emph{Thermodynamic formalism. The mathematical structures of classical equilibrium statistical mechanics.} With a foreword by Giovanni Gallavotti and Gian-Carlo Rota. Encyclopedia of Mathematics and its Applications, 5. Addison-Wesley Publishing Co., Reading, Mass., 1978. xix+183 pp.

\bibitem[Si]{sin} J. Sinai, \emph{Gibbs measures in ergodic theory.} Uspehi Mat. Nauk 27 (1972), no. 4(166), 21--64.

\bibitem[U]{u}  M. Urba\'nski, \emph{Parabolic Cantor sets} Fund. Math. 151 (1996), 241--277.

\bibitem[VZ]{vz} P. Varandas and  Y. Zhao \emph{Weak specification properties and large deviations for non-additive potentials.} To appear in Ergodic Theory and Dynamical Systems



\bibitem[W1]{w}  P.\ Walters,  \emph{An introduction to ergodic theory.} Graduate Texts in Mathematics, 79. Springer-Verlag, New York-Berlin, (1982). ix+250 pp

  \bibitem[W2]{w2} P.\ Walters,  \emph{Convergence of the Ruelle operator for a function satisfying Bowen's condition.} Trans. Amer. Math. Soc.  353  (2001),  no. 1, 327--347

\bibitem[Y1]{y1} M.\ Yuri \emph{Weak Gibbs measures for intermittent systems and weakly Gibbsian states in statistical mechanics. } Comm. Math. Phys. 241 (2003), no. 2-3, 453--466.

\bibitem[Y2]{y2}   M.\ Yuri \emph{Weak Gibbs measures and the local product structure.}  Ergodic Theory Dynam. Systems 22 (2002), no. 6, 1933--1955.


\bibitem[Y3]{y3} M.\ Yuri \emph{Multifractal analysis of weak Gibbs measures for intermittent systems.} Comm. Math. Phys. 230 (2002), no. 2, 365--388.


\bibitem[ZZC]{zzc} Y. Zhao, L. Zhang and Y. Cao, \emph{The asymptotically additive topological pressure on the irregular set for asymptotically additive potentials.}  Nonlinear Analysis, 74, (2011), 5015--5022.

\end{thebibliography}
\end{document}